\numberwithin{equation}{section}
\newcommand{\R}{{ \Bbb  R  }}
\newcommand{\B}{{\mathcal B}}
\newcommand{\EE}{\end{equation}}
\renewcommand{\t}{\tilde}
\renewcommand{\medskip}{\vskip .5 cm}
\newtheorem{Thm}{Theorem}[section]
\newtheorem{Lemma}[Thm]{Lemma}
\newtheorem{Cor}[Thm]{Corollary}
\newtheorem{Prop}[Thm]{Proposition}
\begin{document}

\centerline{\textbf{ \Large Parseval frames with $n+1$ vectors in $\R^n$} } \vskip.2cm

\medskip
\centerline{Laura De Carli and Zhongyuan Hu}

\addtolength{\itemsep}{0.3cm}
\bigskip
\begin{abstract}
We prove a uniqueness theorem for  triangular  Parseval frame  with $n+1$ vectors in $\R^n$.  We also provide a characterization of  unit-norm frames  that  can be scaled to a Parseval frame.
 \end{abstract}
 
 \centerline{Mathematical subject classification: 42C15, 46C99}
 
\section{Introduction}

Let $\B=\{v_1, ...,\,  v_N\}$ be a set of vectors in $\R^n$. We say that $ \B$ is  a {\it frame} if it   contains  a basis of $\R^n$, or equivalently, if     there exist constants $A,\ B>0$  
for which $A ||v||^2\leq \sum_{j=1}^N <v, v_j>^2\leq B||v||^2$ for every $v\in\R^n$.  Here and throughout the  paper, $<\ ,>$  and $||\ ||  $ are the  usual scalar product and norm  in $\R^n$. In general $A<B$, but we say that a frame is {\it tight}  if $A=B$, and  is {\it Parseval} if $A=B=1$. 

Parseval frames are    nontrivial generalizations of orthonormal bases. Vectors in a Parseval frame are not necessarily orthogonal or   linearly independent,  and  do not necessarily have the same length, but the {\it Parseval identities} $v= \sum_{j=1}^N <v, v_j> v_j$ and    $||v||^2= \sum_{j=1}^N <v, v_j>^2$
 still hold. In the applications, frames are more useful than bases because  they  are resilient against the corruptions of additive noise and quantization,
while providing numerically stable reconstructions (\cite{C}, \cite{D}, \cite{GVT}).
Appropriate
frame decomposition  may reveal “hidden” signal characteristics, and
have been employed as detection devices. Specific types of finite tight frames have been studied to solve problems in
information theory. The references are too many to cite, but see \cite{CFKLT},   the recent book \cite{C}  and the references cited there.
 
In recent years,
several inquiries about tight frames have been raised. 
In particular:  how  to characterize    Parseval  frames with $N$ elements in $\R^n$ (or {\it Parseval N frames}),
and   whether  it is possible to scale a given frame   so that the resulting frame is Parseval.

 Following   \cite{CC}  and \cite{KOPT}, we say that a  frame $\B=\{v_1, ...,\, v_N\} $ is {\it scalable} if there exists positive constants $\ell_1$, ..., $\ell_N$ such that
$\{\ell_1 v_1$,..., $\ell_N v_N\}$ is a Parseval frame. 
Two  Parseval N-frames    are   {\it equivalent} if one can be transformed into the other with a rotation of coordinates and the reflection   of one or more vectors.   
 A frame is {\it nontrivial} if no two vectors are parallel. In the rest of the paper,  when we say  "unique" we will always mean    "unique up to an equivalence", and we will often assume without saying that frames are nontrivial. 

 It is well known that Parseval  n-frames   are orthonormal (see also  Corollary \ref{C-orthon}). Consequently,
for  given unit vector $w$,  
 there is a  unique Parseval n-frame  that contains $w$.  If $||w||\ne 1$,  no Parseval  n-frame  contains $w$.

When $N>n$ and $||w||\leq 1$, there are infinitely many non-equivalent Parseval $N$-frames that contain $w$ 
\footnote{ We are indebted to P. Casazza for this remark.}.
By the main theorem in \cite{CL},    it is possible to construct a Parseval frame  $ \{v_1, ...,\, v_N\}$ with vectors of prescribed lengths  $0<  \ell_1 $, ..., $\ell_N \leq 1 $  that satisfy  $\ell_1^1+...+\ell_N^2=n$. 
We can let $\ell_N=||w||$ and,  after  a  rotation of coordinates,  assume that $v_N=w$, thus proving that  the Parseval frames that contain $w$ are as many as the
sets of  constants $\ell_1$, ... $\ell_{N-1}$.   

But when $N=n+1$, there is a class of   Parseval frames  that can be uniquely constructed from a given vector: precisely, all {\it triangular} frames, that is, frames
  $\{v_1, \, ... ,\, v_{N}\} $  such that the matrix $(v_1,\, ... \, v_n)$ whose columns are $v_1$, ..., $v_n$ is right triangular.    
We recall that a matrix $\{a_{i,j}\}_{1\leq i,\, j\leq n }$ is {\it right-triangular} if $a_{i,j}=0 $ if $i>j$.

The following theorem  will be proved in Section 3.

 \begin{Thm}\label{T-Uniq-n+1}  Let $\B=\{v_1,\, ...,\, v_n,\, w\}$  be a triangular Parseval  frame, with 
 $||w||<1$.  Then $\B$ is unique, in the sense that if $\B'=\{v_1',\, ...,\, v_n',\, w\}$   is  another triangular Parseval  frame, then $v_j'=\pm v_j$.
\end{Thm}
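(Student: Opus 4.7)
The plan is to reduce the uniqueness statement to the uniqueness of a Cholesky-type factorization of the matrix $I_n - ww^T$, and then read off the sign ambiguity as exactly column negation.

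First I would arrange the first $n$ frame vectors into the upper-triangular matrix $V = (v_1 \mid \cdots \mid v_n)$ and rewrite the Parseval identity $\sum_{j=1}^{n} v_j v_j^T + w w^T = I_n$ as the matrix equation $V V^T = M$, where $M := I_n - w w^T$. The hypothesis $\|w\| < 1$ makes $M$ symmetric positive definite (its eigenvalues are $1-\|w\|^2 > 0$ and $1$ with multiplicity $n-1$). The analogous matrix $V'$ built from $\B'$ satisfies $V'(V')^T = M$ as well, so both $V$ and $V'$ are upper-triangular square roots of the same positive definite matrix.

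Next I would establish existence and uniqueness of an upper-triangular $V_0$ with $V_0 V_0^T = M$ and strictly positive diagonal. This is a mirror image of classical Cholesky: solving the entries of $V_0$ recursively from $i=n$ down to $i=1$, at each stage $i$ the off-diagonal equations $M_{ij} = V_{0,ij} V_{0,jj} + \sum_{k>j} V_{0,ik} V_{0,jk}$ (for $j>i$, processed in the order $j = n, n-1, \ldots, i+1$) determine $V_{0,ij}$, after which the diagonal equation $V_{0,ii}^2 = M_{ii} - \sum_{k>i} V_{0,ik}^2$ determines $V_{0,ii}$ once one insists on the positive root. Cleanest is to conjugate by the reverse-order permutation $J$: if $L := J V_0 J$, then $L$ is lower-triangular and $L L^T = J M J$, so the statement reduces to the standard Cholesky factorization of the positive definite matrix $J M J$.

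Third I would show that every upper-triangular $V$ with $V V^T = M$ has the form $V = V_0 D$ for some sign matrix $D = \operatorname{diag}(\epsilon_1, \ldots, \epsilon_n)$ with $\epsilon_j \in \{\pm 1\}$. Indeed, setting $Q := V_0^{-1} V$, the equality $V V^T = V_0 V_0^T$ forces $Q Q^T = I_n$, while $Q$ is upper-triangular as a product of two upper-triangulars; an orthogonal upper-triangular matrix is necessarily diagonal with $\pm 1$ entries. Reading columnwise, this says $v_j = \epsilon_j (v_0)_j$. Applying the same to $V'$ gives $V' = V_0 D'$, whence $v_j' = (\epsilon_j \epsilon_j') v_j = \pm v_j$, as claimed.

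The main obstacle I anticipate is Step 2, verifying that each $V_{0,ii}^2$ produced by the recursion is strictly positive so a real square root exists. This amounts to showing that the trailing principal submatrices of $M$ are positive definite, which follows either via the $J$-conjugation trick above (reducing to the standard Cholesky theorem for leading submatrices) or by a direct Schur-complement induction. In either case the argument is routine, and the rest of the proof is essentially formal linear algebra.
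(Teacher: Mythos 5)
Your proposal is correct, and it takes a genuinely different route from the paper. The paper proceeds by induction on the dimension: it projects $w$ to $\widetilde w=(\alpha_2,\dots,\alpha_n)$, invokes the inductive hypothesis in $\R^{n-1}$, and lifts the resulting frame back to $\R^n$ by producing (via a determinant formula) the unique unit vector orthogonal to the rows of $(\t v_2,\dots,\t v_n,\widetilde w)$ and solving for the scalar $\lambda$ with $\lambda y_{n+1}=\alpha_1$; this is, in effect, a hand-rolled Cholesky recursion, and it yields explicit formulas for the diagonal entries $a_{jj}$ and for $\det(v_1,\dots,v_n)$ that the authors reuse later (Corollary \ref{C-determinant} and the proof of Theorem \ref{T-NandSuff}). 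You instead package the whole argument as the uniqueness of the upper-triangular factorization $VV^T=I_n-ww^T$ of a positive definite matrix, which is cleaner and, importantly, makes the actual uniqueness claim ($v_j'=\pm v_j$) fully explicit via the observation that an orthogonal upper-triangular matrix is diagonal with $\pm1$ entries --- a point the paper's constructive proof leaves somewhat implicit. Two small remarks: for the uniqueness statement alone you do not even need the reference factor $V_0$, since $Q:=V^{-1}V'$ is already orthogonal and upper-triangular (both $V$ and $V'$ are invertible because $\det(V)^2=\det(I_n-ww^T)=1-\|w\|^2>0$); and your framework recovers Corollary \ref{C-determinant} for free from $\det(V)^2=1-\|w\|^2$, whereas it does not by itself produce the closed-form expression \eqref{e-ajj} for the $a_{jj}$ that the paper's recursion supplies.
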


Every frame is equivalent, through a rotation of coordinates  $\rho$, to a  triangular frame, 
and so  Theorem \ref{T-Uniq-n+1} implies that every Parseval $(n+1)$-frame  that contains a given vector $w$   is equivalent to one which is uniquely determined  by $\rho(w)$. 
However, that does not imply that the frame itself is uniquely determined by $w$  because the rotation $\rho$ depends also on the other vectors of the frame.

%

\medskip
We  also study the problem of determining whether a
given frame  $\B =\{v_1, ...,v_n, v_{n+1}\} \subset\R^n$  is scalable or not. Assume   $||v_j||=1$, and  let $ \theta_{i,j}\in [0,\pi) $ be the angle  between $v_i$ and $v_j$.

If
  $\B$   contains an orthonormal basis, then the problem has no solution, so we assume that this not the case. 
 We prove the following 
   
\begin{Thm}\label{T-NandSuff}   $\B$ is scalable  if and only  there exist constants $\ell_1$, ...,\, $\ell_{n+1}$ such that  for every $i\ne j$
\begin{equation}\label{e-id-cosine}
(1-\ell_i^2)(1-\ell_j^2)= \ell_i^2 \ell_j^2\cos\theta_{i,j}^2.
\end{equation}
\end{Thm}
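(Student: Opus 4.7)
The plan is to convert the Parseval property into a rank condition on an auxiliary $(n+1)\times(n+1)$ matrix. Let $V$ be the $n\times(n+1)$ matrix whose columns are $v_1,\ldots,v_{n+1}$ and set $D=\mathrm{diag}(\ell_1^2,\ldots,\ell_{n+1}^2)$. Scalability of $\B$ by $\ell_1,\ldots,\ell_{n+1}$ is equivalent to $VDV^T=I_n$, which is in turn equivalent to the $(n+1)\times(n+1)$ matrix $M:=D^{1/2}V^TVD^{1/2}$, whose entries are $M_{ii}=\ell_i^2$ and $M_{ij}=\ell_i\ell_j\langle v_i,v_j\rangle$ for $i\neq j$, being an orthogonal projection of rank $n$ on $\R^{n+1}$.

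For the forward implication, if $M$ is a rank-$n$ orthogonal projection then $I_{n+1}-M$ is a rank-$1$ orthogonal projection, hence $I_{n+1}-M=cc^T$ for some unit vector $c\in\R^{n+1}$. Reading diagonal and off-diagonal entries gives $c_i^2=1-\ell_i^2$ and $c_ic_j=-\ell_i\ell_j\langle v_i,v_j\rangle$; squaring the second identity and combining with the first and $\langle v_i,v_j\rangle^2=\cos^2\theta_{i,j}$ yields exactly (1.1).

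For the reverse implication I start from the $\ell_i$'s that satisfy (1.1), set $a_i=\sqrt{1-\ell_i^2}$ (well-defined since (1.1) forces all $1-\ell_i^2$ to share a common sign), and seek signs $\varepsilon_i\in\{\pm 1\}$ so that the vector $c$ with coordinates $c_i=\varepsilon_i a_i$ satisfies $c_ic_j=-\ell_i\ell_j\langle v_i,v_j\rangle$ for every $i\neq j$. Condition (1.1) gives $(a_ia_j)^2=(\ell_i\ell_j\cos\theta_{i,j})^2$, so the absolute values already agree and only a consistent sign choice remains. I would draw this choice from the unique (up to scalar) linear dependence $\sum_i k_iv_i=0$ in $\R^n$: taking the inner product of this relation with each $v_j$ ties the sign pattern of the $\langle v_i,v_j\rangle$ to the signs of the $k_i$, and setting $\varepsilon_i=\mathrm{sgn}(k_i)$ should deliver a valid assignment. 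Once $c$ is in hand, $I_{n+1}-cc^T$ and $M$ agree entry-wise, so $M=I_{n+1}-cc^T$ is a rank-$n$ projection; then the nonzero eigenvalues of $VDV^T$ (which coincide with those of $M$) are all equal to $1$, giving $VDV^T=I_n$ and scalability.

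The main obstacle will be the sufficiency direction, where two coupled issues must be resolved: the sign-compatibility of the $\varepsilon_i$ across every triangle of indices (a signed-graph balance problem on $K_{n+1}$) and the normalization $\|c\|^2=\sum_i(1-\ell_i^2)=1$, equivalently $\sum_i\ell_i^2=n$. Neither is a purely formal consequence of the bilinear system (1.1), and both must be extracted from the standing hypotheses that $\B\subset\R^n$ consists of $n+1$ unit vectors spanning $\R^n$ and contains no orthonormal basis; I expect the argument to route through the dependence coefficients $k_i$ together with the rank properties of $V$.
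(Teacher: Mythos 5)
Your necessity argument is correct and is, in different language, the paper's own: adjoining to $(\ell_1v_1,\ldots,\ell_{n+1}v_{n+1})$ the row $\vec x$ of signed $n\times n$ minors so as to complete it to an orthogonal matrix is exactly your rank-one complement $I_{n+1}-M=cc^T$, and reading off the off-diagonal entries and squaring gives \eqref{e-id-cosine}. On the sufficiency side, one of the two obstacles you list is not actually an obstacle: the normalization $\|c\|^2=1$ comes for free in your own framework. Indeed $M=D^{1/2}V^TV D^{1/2}$ has rank $n$ (the $v_i$ span $\R^n$, and \eqref{e-id-cosine} forces every $\ell_i>0$ once frames containing an orthonormal basis are excluded), while $I_{n+1}-cc^T$ has rank $n$ only if $\|c\|=1$; so as soon as the entrywise identity $M=I_{n+1}-cc^T$ is in place, $\sum_i\ell_i^2=n$ is a conclusion, not a needed hypothesis. (The paper reaches the same point by observing that each column $(\pm\sqrt{1-\ell_j^2},\,\ell_jv_j)$ is automatically a unit vector, so only orthogonality of columns is at stake.)

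The sign-compatibility problem, by contrast, is a genuine gap, and no choice of the $\varepsilon_i$ can close it: \eqref{e-id-cosine} sees only $\cos^2\theta_{i,j}$, whereas your construction needs $\mathrm{sgn}\bigl(\langle v_i,v_j\rangle\langle v_j,v_k\rangle\langle v_k,v_i\rangle\bigr)=-1$ on every triple with nonvanishing inner products, and this is an extra condition rather than a consequence. Concretely, take $n=2$, $v_1=(1,0)$, $v_2=(\cos 40^\circ,\sin 40^\circ)$, $v_3=(\cos 80^\circ,\sin 80^\circ)$. Writing $u_i=(1-\ell_i^2)/\ell_i^2$, the system \eqref{e-id-cosine} reads $u_1u_2=u_2u_3=\cos^2 40^\circ$ and $u_1u_3=\cos^2 80^\circ$, which is solvable with all $\ell_i^2\in(0,1)$; yet this frame is not scalable, since by Proposition \ref{PCasazza-n=2} scalability would require $c_1^2+c_2^2e^{2i\cdot 40^\circ}+c_3^2e^{2i\cdot 80^\circ}=0$ with $c_i>0$, impossible because the three directions lie in an open half-plane. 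Equivalently, all three inner products are positive, so no signs satisfy $\varepsilon_i\varepsilon_j=-1$ for every pair. Thus the ``if'' direction is false as literally stated, and the paper's own converse conceals the same defect in the phrase ``for some choice of the sign $\pm$''; the correct sufficient condition is the existence of a unit vector $c$ with $c_ic_j=-\ell_i\ell_j\langle v_i,v_j\rangle$ exactly, signs included, which is precisely what your projection formulation produces in the necessity direction. Your proposal to extract the signs from the dependence relation $\sum_i k_iv_i=0$ cannot succeed as a proof of the stated theorem; it is, rather, where a corrected statement would have to insert an additional hypothesis.
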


The identity \eqref{e-id-cosine}  has several interesting consequences (see corollary \ref{C-boundsforlength}). First of all, it shows that     $ \ell_j^2\leq 1$; if 
 $\ell_i=1$ for some $i$,   we also have  $ \cos\theta_{i,j}^2=0$    for every $j$, and so 
$v_i$ is orthogonal to all  other vectors.  This interesting fact  is also true for  other  Parseval  frames,  and is a consequence of the following  

\begin{Thm}\label{TNec-Parframe}
Let $\B=\{v_1, ...,\, v_{N  }\}$  be a Parseval frame.  Let $||v_j||=\ell_j$. Then 
\begin{equation}\label{e-necN}
\sum_{j= i}^{N}  \ell_j^2 \cos^2 \theta_{ij}  = 1 
\quad 
\sum_{j= i}^{N} \ell_j^2 \sin^2 \theta_{ij} = n-1
\end{equation}
  \end{Thm}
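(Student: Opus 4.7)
The plan is to apply the Parseval identity $||v||^2=\sum_{j=1}^N <v,v_j>^2$ directly, with the test vector $v$ taken to be $v_i$ itself. Since $<v_i,v_j>=\ell_i\ell_j\cos\theta_{ij}$, this substitution gives $\ell_i^2=\ell_i^2\sum_{j=1}^N\ell_j^2\cos^2\theta_{ij}$, and dividing through by $\ell_i^2$ yields the first identity of \eqref{e-necN}.

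For the second identity, I would replace $\sin^2\theta_{ij}$ by $1-\cos^2\theta_{ij}$ and split the sum:
$$\sum_{j=1}^N \ell_j^2\sin^2\theta_{ij} \;=\; \sum_{j=1}^N\ell_j^2\;-\;\sum_{j=1}^N\ell_j^2\cos^2\theta_{ij}\;=\;\sum_{j=1}^N\ell_j^2\;-\;1,$$
where the first identity was used at the last step. It remains only to verify the trace identity $\sum_{j=1}^N\ell_j^2=n$.

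This is the one auxiliary fact worth noting. I would derive it either by applying the scalar Parseval identity $||e_k||^2=\sum_j<e_k,v_j>^2$ to each vector of an orthonormal basis $\{e_1,\ldots,e_n\}$ of $\R^n$ and summing over $k$, which collapses the right-hand side to $\sum_j||v_j||^2$ and the left-hand side to $n$; or equivalently by taking the trace of the operator form $\sum_j v_j v_j^T=I_n$ of the Parseval identity. Either way one obtains $\sum_j\ell_j^2=n$, which completes the proof of the second identity.

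I do not expect any genuine obstacle: the theorem is a short bookkeeping calculation once one observes that a Parseval frame in $\R^n$ has squared norms summing to $n$. The only minor point to be aware of is the $j=i$ term in the sums, for which $\cos^2\theta_{ii}=1$ and $\sin^2\theta_{ii}=0$, values that are in any case consistent with both identities.
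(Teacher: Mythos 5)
Your proof is correct. For the first identity your argument and the paper's are the same computation in different notation: the paper rotates so that $v_i$ lies along $\vec e_1$ and reads off the squared norm of the first row of the frame matrix, whose entries are exactly $\ell_j\cos\theta_{ij}$; you instead plug $v=v_i$ into the Parseval identity and divide by $\ell_i^2$ (legitimate, since $\theta_{ij}$ is only defined when $v_i\neq 0$). For the second identity the routes genuinely diverge. The paper appeals to the fact that the projections of $v_2,\dots,v_N$ onto the hyperplane orthogonal to $v_i$ form a Parseval frame in $\R^{n-1}$, and then sums the squared lengths of those projections to get $n-1$; this projection lemma is stated there without proof and is a slightly heavier tool. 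You instead write $\sin^2\theta_{ij}=1-\cos^2\theta_{ij}$ and subtract the first identity from the trace identity $\sum_j\ell_j^2=n$ (which the paper records in its preliminaries, and which you rederive correctly from $\sum_j v_jv_j^T=I_n$). Your derivation of the second identity is more elementary and self-contained, at the cost of not exhibiting the geometric content (the hyperplane projection) that the paper's version makes visible; both are valid, and in fact the paper itself remarks right after the theorem that the two identities are linked exactly by the relation you exploit.
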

  
  The identities   \eqref{e-necN} are probably known, but we did not find a reference in the literature.
It is worthwhile to remark that  from \eqref{e-necN} follows that  $$
\sum_{j= i}^{N} \ell_j^2 \cos^2 \theta_{ij}  -
\sum_{j= i}^{N}  \ell_j^2 \sin^2 \theta_{ij}=\sum_{j= i}^{N}  \ell_j^2 \cos(2 \theta_{ij})  = 2-n.$$  When  $n=2$, this identity is proved in   Proposition \ref{PCasazza-n=2}. 

%
%

Another consequence of Theorem \ref{T-Uniq-n+1} is the following
\begin{Cor}\label{C-nec-cosines}
If $\B=\{v_1, ..., v_n, v_{n+1}\}$ is a scalable frame, then, for every $1\leq i\leq n+1$, and every $  j\ne k\ne i$  and $k'\ne j'\ne i$,
\begin{equation}\label{e-2cos-ratios}
\frac{|\cos\theta_{k,j}|}{|\cos\theta_{k,j}|+|\cos\theta_{k, i}\cos\theta_{j,i}|}= \frac{|\cos\theta_{k',j'}|}{|\cos\theta_{k',j'}|+|\cos\theta_{k', i}\cos\theta_{j',i}|}.
\end{equation}
\end{Cor}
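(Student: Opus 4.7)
The plan is to apply Theorem \ref{T-NandSuff}, which says that scalability is equivalent to the existence of lengths $\ell_1,\dots,\ell_{n+1}$ satisfying
\[
(1-\ell_i^2)(1-\ell_j^2) = \ell_i^2\ell_j^2 \cos^2\theta_{i,j}
\]
for all $i\neq j$. The miracle I would exploit is that this identity factors: setting
\[
t_i := \frac{\sqrt{1-\ell_i^2}}{\ell_i}
\]
(which is meaningful because Theorem \ref{T-NandSuff} forces $\ell_i^2\le 1$, and the degenerate case $\ell_i=1$ only occurs when $v_i$ is orthogonal to every other vector, a situation I would verify separately), the identity becomes simply $t_i t_j = |\cos\theta_{i,j}|$. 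So every pairwise cosine is the product of two ``factors,'' one attached to each vector.

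With this reduction in hand, the rest is a one-line substitution. For $j,k\neq i$,
\[
|\cos\theta_{k,j}| = t_k t_j, \qquad |\cos\theta_{k,i}\cos\theta_{j,i}| = (t_k t_i)(t_j t_i) = t_i^2\, t_j t_k,
\]
so the left-hand side of \eqref{e-2cos-ratios} collapses to
\[
\frac{t_k t_j}{t_k t_j + t_i^2 t_j t_k} = \frac{1}{1+t_i^2},
\]
an expression that depends only on $i$. The same computation applied to the indices $j',k'\neq i$ gives the identical value $1/(1+t_i^2)$, which is exactly the equality claimed in \eqref{e-2cos-ratios}.

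The only step requiring care is the edge case $\ell_i=1$, where $t_i$ is zero (and even in that case the factorization $|\cos\theta_{i,j}|=t_it_j$ still holds, since both sides vanish), and the trivial exclusions when some $\cos\theta_{k,i}$ vanishes. I do not anticipate a real obstacle here; the entire content is the algebraic observation that the compatibility condition \eqref{e-id-cosine} has rank-one structure $|\cos\theta_{i,j}|=t_it_j$, and \eqref{e-2cos-ratios} is simply the statement that the ratio on either side is the invariant $1/(1+t_i^2)$ of the vertex $i$.
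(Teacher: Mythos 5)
Your proof is correct and follows essentially the same route as the paper: both apply Theorem \ref{T-NandSuff} and show that each ratio in \eqref{e-2cos-ratios} equals a quantity depending only on $i$ (your $1/(1+t_i^2)$ is exactly the paper's $\ell_i^2$, since $1+t_i^2=1/\ell_i^2$). Your rank-one factorization $|\cos\theta_{i,j}|=t_it_j$ is merely a cleaner packaging of the paper's step of solving the three instances of \eqref{e-id-cosine} for $\ell_i^2$.
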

\medskip

We prove  Theorems \ref{T-Uniq-n+1}, \ref{T-NandSuff} and \ref{TNec-Parframe}  and their corollaries in Section   3. In Section 2 we prove some preliminary results and lemmas.

\medskip
\noindent
{\it Acknowledgement.} We wish to thank Prof. P. Casazza and Dr.  J. Cahill for stimulating conversations.

\section{Preliminaries}


We refer to \cite{CK} or to  \cite{HKLW} for the definitions and basic properties of finite frames. 

We recall that    ${\mathcal B} =\{ v_1, ... ,\, v_N \} $   is a Parseval  frames in $\R^n$ if and only if   rows of the matrix $(v_1, ..., v_N)$ are orthonormal.  
Consequently,   
 $ 
 \sum_{i=1}^N ||v_i||^2=n.
 $ 
   If the   vectors in $\B$ have all    the same length, then   $||v_i||=\sqrt {n/N}$. See e.g. \cite{CK}.

We will often let  $\vec  e_1= (1, 0,\, ...,\,0)$, ... $\vec  e_n= (0,   \, ...,0,\,\,1)$, and we will denote by 
$(v_1, ..., \hat v_k, ... v_N)$ the matrix with the column  $v_k$ removed.

 To the best of our knowledge, the following proposition is due to P.  Casazza (unpublished-2000)  but  can also be found  in  \cite{GK} and in the recent preprint \cite{CKLMNPS}.
 
    \begin{Prop}\label{PCasazza-n=2}   $\B=\{v_1,...,\,v_N\} \subset \R^2$ 
   is a tight frame if and only   if for some index $i\leq N$,  
\begin{equation}\label{e-idn=2} \sum_{j=1}^N ||v_j|| ^2 e^{2i\theta_{i,j}}=0 .\end{equation}
\end{Prop}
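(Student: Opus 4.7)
The plan is to exploit the two-dimensional geometry by identifying $\R^2$ with $\C$ via $(x,y)\leftrightarrow x+iy$. Under this identification each vector becomes $v_j = r_j e^{i\phi_j}$ with $r_j = ||v_j||$ and $\phi_j$ its argument, so the hoped-for identity \rq{e-idn=2} has the natural shape of a vanishing weighted sum on the unit circle in $\C$.

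First I would translate tightness into a matrix condition via the frame operator $S=\sum_j v_jv_j^T$. Using the half-angle identities $\cos^2\phi=(1+\cos 2\phi)/2$, $\sin^2\phi=(1-\cos 2\phi)/2$, and $2\cos\phi\sin\phi=\sin 2\phi$, each rank-one summand splits as
\[
v_j v_j^T \;=\; \frac{r_j^2}{2}\, I \;+\; \frac{r_j^2}{2}\begin{pmatrix}\cos 2\phi_j & \sin 2\phi_j\\ \sin 2\phi_j & -\cos 2\phi_j\end{pmatrix}.
\]
Summing and demanding $S=A\,I$, the isotropic pieces contribute $A=\tfrac12\sum_j r_j^2$, while the trace-zero pieces must sum to the zero matrix. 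Its two independent entries are precisely $\Re$ and $\Im$ of $\sum_j r_j^2 e^{2i\phi_j}$, so tightness reduces to the single complex equation $\sum_{j=1}^N r_j^2 e^{2i\phi_j}=0$.

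Next I would rotate this identity relative to any fixed reference index $i$: multiplying by the unimodular factor $e^{-2i\phi_i}$, an equivalence since $|e^{-2i\phi_i}|=1$, yields $\sum_j r_j^2 e^{2i(\phi_j-\phi_i)}=0$. Because $e^{2i(\cdot)}$ has period $\pi$, the exponent depends only on $\phi_j-\phi_i\bmod \pi$, which is exactly the angle $\theta_{i,j}\in[0,\pi)$ in the paper's convention; this gives \rq{e-idn=2}. The converse reverses every step: granted \rq{e-idn=2} for some $i$, multiplying by $e^{2i\phi_i}$ recovers $\sum_j r_j^2 e^{2i\phi_j}=0$, hence $S=\tfrac12\big(\sum_j r_j^2\big)\,I$, which is a positive multiple of $I$ (excluding the trivial all-zero case) and so forces the $v_j$ to span $\R^2$, making $\B$ a genuine tight frame.

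The only point that requires care is the angle bookkeeping: since the paper takes $\theta_{i,j}\in[0,\pi)$, one should read $\theta_{i,j}$ as the oriented line angle $\phi_j-\phi_i\bmod\pi$, for which $e^{2i\theta_{i,j}}$ is unambiguous and coincides with $e^{2i(\phi_j-\phi_i)}$. Beyond this convention check, the argument is a routine diagonalization of a $2\times 2$ symmetric matrix and presents no substantive obstacle.
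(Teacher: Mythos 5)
Your proof is correct and is essentially the paper's argument in different clothing: the condition $S=\sum_j v_jv_j^T=A\,I$ is exactly the paper's statement that the rows of $(v_1,\dots,v_N)$ are orthogonal of common squared length $A$, and your isotropic-plus-traceless splitting via half-angle identities reproduces the paper's passage from \eqref{eq1}--\eqref{eq2} to $\sum_j\ell_j^2\cos(2\theta_j)=\sum_j\ell_j^2\sin(2\theta_j)=0$. The only cosmetic difference is that the paper rotates $v_i$ onto the positive $x$-axis at the outset instead of multiplying by $e^{-2i\phi_i}$ at the end, and your remark on reading $\theta_{i,j}$ as the line angle mod $\pi$ is the same convention the paper uses implicitly.
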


It is easy to verify that if \eqref{e-idn=2} is valid for some index $i$, then it is valid for all other  $i$'s.

\begin{proof}
 %
 Let $\ell_j=||v_j||$. After a rotation, we can let $ v_i =v_1= (\ell_1,0)$ and $\theta_{1, j}=\theta_j$, so that  $v_j= (\ell_j\cos\theta_{ j}, \ell_j\sin\theta_{ j})$.

 
 $\B$ is a tight frame with frame constant $A$ if and only if  the rows of the matrix $(v_1, ..., v_N)$ are  orthogonal and have length $A$. That implies 
  \begin{equation}\label{eq1}
  \sum_{j=1}^N \ell_j^2  \cos^2\theta_{ j}=\sum_{j=1}^N \ell_j^2 \sin^2\theta_{ j}=A
  \end{equation}
  and
   \begin{equation}\label{eq2}
  \sum_{j=1}^N \ell_j^2 \cos  \theta_{ j} \sin\theta_{ j}=0.  
  \end{equation}
 From \eqref{eq1} follows that $\sum_{j=1}^N \ell_j^2(\cos^2\theta_{ j}-\sin^2\theta_{ j})= \sum_{j=1}^N \ell_j^2  \cos(2\theta_{ j})=0$, and from \eqref{eq2}   that $ \sum_{j=1}^N \ell_j^2  \sin(2\theta_{ j})=0$, and so we have proved \eqref{e-idn=2}.
 
If  \eqref{e-idn=2} holds,  then \eqref{eq1} and \eqref{eq2}  hold  as well, and from these identities follows that $\B$ is a tight frame.

 \end{proof}
 
 \medskip

\begin{Cor}\label{C3vectorsR2}
 Let  $\B=\{v_1,\ v_2, v_3\}\subset \R^2$ be a tight frame. Assume that  the  $v_i$'s have all the same length. Then,    
  $  \theta_{1,2}  =\pi/3$, and   
  $  \theta_{1,3} =2\pi/3$.
  \end{Cor}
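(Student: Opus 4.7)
The plan is to apply Proposition \ref{PCasazza-n=2} with $i=1$. Denoting the common length by $\ell$ and using $\theta_{1,1}=0$, the identity \eqref{e-idn=2} reduces, after dividing by $\ell^2$, to
\[
1+e^{2i\theta_{1,2}}+e^{2i\theta_{1,3}}=0,
\]
so three unit-modulus complex numbers sum to zero.

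The main computational step is to solve $z+w=-1$ under $|z|=|w|=1$, where $z:=e^{2i\theta_{1,2}}$ and $w:=e^{2i\theta_{1,3}}$. Writing $z=a+bi$, $w=c+di$ and combining $a+c=-1$, $b+d=0$ with $a^2+b^2=c^2+d^2=1$ quickly forces $a=c=-1/2$ and $b=-d=\pm\sqrt{3}/2$, so $\{z,w\}=\{e^{2\pi i/3},\,e^{-2\pi i/3}\}$. In particular $z\ne w$, since $1+2z=0$ has no solution on the unit circle. Translating back via $2\theta_{1,j}\equiv \pm 2\pi/3 \pmod{2\pi}$, each $\theta_{1,j}$ belongs to $\{\pi/3,\,2\pi/3\}$.

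To reach the exact form $\theta_{1,2}=\pi/3$ and $\theta_{1,3}=2\pi/3$ stated by the corollary, one invokes the paper's conventions of equivalence: relabeling $v_2$ and $v_3$, and reflecting individual vectors, which sends an unsigned angle $\theta_{1,j}$ to $\pi-\theta_{1,j}$ and hence swaps $\pi/3$ with $2\pi/3$. Together these two operations let us arrange that the two angles are distinct and that $\theta_{1,2}$ is the smaller.

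No substantive obstacle is anticipated; the entire argument is essentially the elementary observation that three unit complex numbers summing to zero must be placed at the cube roots of unity, so once Proposition \ref{PCasazza-n=2} is in hand the corollary is an immediate computation together with the standard frame-equivalence bookkeeping.
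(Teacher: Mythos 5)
Your proof is correct and follows essentially the same route as the paper: both apply Proposition \ref{PCasazza-n=2} with $i=1$ and solve the resulting identity $1+e^{2i\theta_{1,2}}+e^{2i\theta_{1,3}}=0$, a step the paper dismisses as "easy to verify" and you carry out explicitly. The only difference is that you also spell out the relabeling/equivalence bookkeeping that the paper handles with the phrase "or viceversa."
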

  
So, every  such  frame $\B$  is equivalent to a   dilation of the   "Mercedes-Benz frame"   $ \left\{ (1, 0), \ (-\frac{1}{2}, \frac{\sqrt 3}{2}),  (-\frac{1}{2}, -\frac{\sqrt 3}2 )\right\}.$ 

\begin{proof} Let  $v_1=(1,0)$, and  $\theta_{1, i}= \theta_i$ for simplicity.  By Proposition \ref{PCasazza-n=2}, $1+\cos(2\theta_2)+ \cos(2\theta_3)=0$, and $\sin(2\theta_2)+ \sin(2\theta_3)=0$.  It is easy to verify that  these equations are satisfied only when $\theta_2=\frac \pi 3$ and  $  \theta_{ 3} =\frac{2\pi}3$ or viceversa.\end{proof}

\medskip

The following simple proposition  is a special case of  Theorem \ref{T-Uniq-n+1}, and will be a necessary step in the proof. 

\begin{Lemma}\label{L-3vectR2}
Let $w=(\alpha_1,\,\alpha_2)$ be   given.  Assume $||w||  <1$.  There exists a unique nontrivial Parseval  frame 
$\{v_1, v_2, w\}\subset \R^2$,   with  $v_1= (a_{11}, 0)$,  $v_2= (a_{1,2},\  a_{2,2})$, and $a_{1,1}$, $a_{2,2}>0$.
\end{Lemma}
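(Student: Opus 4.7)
The plan is to read the Parseval condition directly off the $2\times 3$ matrix whose columns are $v_1, v_2, w$, and to solve for the entries $a_{11}, a_{12}, a_{22}$ one at a time, showing each step is forced by the constraints.

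First I would invoke the characterization recalled in Section~2: $\{v_1, v_2, w\}$ is Parseval iff the rows of
\[
M = \begin{pmatrix} a_{11} & a_{12} & \alpha_1 \\ 0 & a_{22} & \alpha_2 \end{pmatrix}
\]
are orthonormal. This gives three scalar equations: $a_{11}^2 + a_{12}^2 + \alpha_1^2 = 1$ (row~1 unit), $a_{22}^2 + \alpha_2^2 = 1$ (row~2 unit), and $a_{12}a_{22} + \alpha_1\alpha_2 = 0$ (orthogonality).

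The second equation forces $a_{22}^2 = 1-\alpha_2^2$. Since $\|w\|<1$ implies $\alpha_2^2 < 1$, and since we require $a_{22}>0$, the value $a_{22} = \sqrt{1-\alpha_2^2}$ is uniquely determined. Then the orthogonality relation gives $a_{12} = -\alpha_1\alpha_2 / \sqrt{1-\alpha_2^2}$, again uniquely. Substituting into the first equation and simplifying,
\[
a_{11}^2 \;=\; 1 - \alpha_1^2 - \frac{\alpha_1^2\alpha_2^2}{1-\alpha_2^2} \;=\; \frac{(1-\alpha_1^2)(1-\alpha_2^2) - \alpha_1^2\alpha_2^2}{1-\alpha_2^2} \;=\; \frac{1-\|w\|^2}{1-\alpha_2^2}.
\]
The hypothesis $\|w\|<1$ makes the right-hand side strictly positive, so the constraint $a_{11}>0$ selects a unique value $a_{11} = \sqrt{(1-\|w\|^2)/(1-\alpha_2^2)}$. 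This produces existence and uniqueness simultaneously.

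There is essentially no analytic obstacle; the only thing to watch is nontriviality. Since $a_{22}>0$, $v_2$ is not parallel to $v_1=(a_{11},0)$, and $w$ is not parallel to $v_1$ provided $\alpha_2\neq 0$ (the degenerate case $\alpha_2=0$ forces $v_2=\pm(0,1)$ and $w\parallel v_1$, which is excluded by the nontriviality assumption recalled in the introduction). Finally, $v_2$ and $w$ are not parallel because if they were, the second row of $M$ would be proportional to $(a_{22},\alpha_2)$ times a scalar applied to the first row, incompatible with the first column $(a_{11},0)$ having $a_{11}>0$. Thus the frame $\{v_1,v_2,w\}$ so constructed is the unique nontrivial triangular Parseval frame containing $w$.
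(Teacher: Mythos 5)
Your proposal is correct and follows essentially the same route as the paper: read off the three orthonormality equations from the $2\times 3$ matrix and solve for $a_{22}$, $a_{12}$, $a_{11}$ in that order, with the sign conventions forcing uniqueness and $\|w\|<1$ guaranteeing $a_{11}^2>0$. The only divergence is in the nontriviality check, where your argument that $v_2\not\parallel w$ (via rows of $M$) is muddled compared to the paper's direct slope comparison $a_{12}/a_{22}=\alpha_1/\alpha_2 \iff -\alpha_2^2=1-\alpha_2^2$; but this is a peripheral point and neither treatment is airtight in the degenerate case $\alpha_1=0$.
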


\begin{proof}  We  find $ a_{1,1}$, $  a_{1,2}$ and $ a_{2,2} $  so that the rows of the matrix \newline $  \left(\begin{matrix}  & a_{11}, &a_{12}, &\alpha_1  \\  &0, &a_{22},  & \alpha_2 \end{matrix}\right)$ are orthonormal.  That is, 
\begin{equation}\label{e-tec1}
\alpha_1^2+ a_{1,1}^2+ a_{1,2}^2=1, \quad \alpha_2^2+a_{2,2} ^2=1, \quad \alpha_1\alpha_2+ a_{1,2}a_{2,2}=0.
\end{equation}
From the second equation, $a_{2,2} = \pm\sqrt{1-\alpha_2^2}  $;  if we can chose $a_{2,2}>0$,  from the  third equation we obtain 
$a_{1,2}=-\frac{\alpha_1\alpha_2}{\sqrt{1-\alpha_2^2}}   $ and from the first equation
$$ a_{1,1}^2 =1-\alpha_1^2-a_{12} ^2 =  1-\alpha_1^2- \frac{\alpha_1^2\alpha_2^2}{ 1-\alpha_2^2}=\frac{1-\alpha_1^2-\alpha_2^2  }{1-\alpha_2^2}.  $$  
 Note that $a_{1,1}^2>0$   because   $||w||^2= \alpha_1^2+\alpha_2^2<1$. We can chose then $$a_{1,1}=   \frac{\sqrt{1-\alpha_2^2-\alpha_1^2}} {\sqrt{1-\alpha_2^2} }.$$
Note also that $v$ and $v_2$ cannot be parallel; otherwise,  $\frac { a_{1,2} }{a_{2,2}}= -\frac{\alpha_1\alpha_2}{1-\alpha_2^2} =\frac {\alpha_1}{\alpha_2}\iff -\alpha_2^2=1-\alpha_2^2$, which is not possible.
 \end{proof}

\noindent{\it Remark.} The proof   shows   that $v_1$ and $v_2$ are uniquely determined by $w$. It shows also that if   $||w||=1$,  then $a_{1,1}=0$, and consequently  $v_1=0$.

 \section{Proofs}
 
In this section we prove Theorem \ref{T-Uniq-n+1} and some of its corollaries.

\begin{proof}[ Proof of Theorem \ref{T-Uniq-n+1} ]   Let  $   w= (\alpha_1, ..., \alpha_n)$.
 We construct a  nontrivial Parseval frame ${\cal M}=\{v_1, ..., v_n, w\}\subset \R^n$ with the following properties:   the matrix  $(v_1, ...,v_n) =\{a_{i,j}\}_{1\leq i,j\leq n}$    is right  triangular, and 
 \begin{equation}\label{e-ajj}
 a_{j,j}=\begin{cases} \sqrt{1-\alpha_n^2} & \mbox{ \ if\ }  j=n  \cr  \frac{\sqrt{1- \sum_{k=j }^{n}\alpha_{k}^2}}{\sqrt{1-\sum_{k=j+1}^{n }\alpha_{k}^2 }} & \mbox{ \ if\ } 1\leq j<n. \end{cases}
 \end{equation}
   
  The proof will show that ${\cal M}$ is unique, and also that the assumption that $||w||<1$ is necessary in the proof.

    To construct the vectors $v_j$   we argue
by induction on $n$.  When $n=2$ we have already proved the result in  Lemma \ref{L-3vectR2}.  We now assume  that    the lemma is valid in dimension  $n-1$, and we show that it is valid also in dimension $n$. 

 Let  $ \t w= (\alpha_2, ..., \alpha_n)$. By assumptions, there exist vectors $\t v_2, ...,\, \t v_n $ such that 
   the set  $\widetilde {\cal M}=  \{\t v_2, ..., \t v_n, \t w\}\subset \R^{n-1}$
   is a  Parseval frame, and the matrix $(\t v_2, ..., \t v_n, \t w)$ is right triangular and invertible.  If we assume that the elements of the diagonal are positive, 
   the   $\t v_j$'s are uniquely determined by  $w$.  We let  $\t v_j= (a_{2,j}, ..., a_{n,j})$, with $a_{k, j}=0$ if $k<j$ and $a_{j,j}>0$.

   We  show that  $\widetilde {\cal M}$ is the projection  on $\R^{n-1}$ of a Parseval frame in $\R^n=\R\times\R^{n-1}$ that satisfies the assumption of the theorem. To   this aim, we prove that there exist   scalars  $x_1$,..., $ x_n$ so that the vectors $\{v_1, ..., v_{n+1}\}$  which are defined by
    \begin{equation}\label{e-def-vj}
    v_1= (x_1, 0, ..., 0), \quad v_j= (x_j, \t v_j)\ \mbox{ if} \ 2\leq j\leq n,\    v_{n+1}=w 
\end{equation}      
form a Parseval  frame of $\R^n$.  
   The proof is in various steps: 
 first, we construct   a unit vector $(y_2, ..., y_{n+1})$ which is orthogonal to the rows of the matrix $(\t v_2, ... , \t v_n, \t w)$. Then, we show  that there exists   $-1<\lambda<1$ so that $\lambda y_{n+1}=    \alpha_1$.
  Finally, we chose $x_1= \sqrt{1-\lambda^2}$, $x_j=\lambda y_j$, and we prove that the vectors $ v_1, ..., v_{n+1} $ defined in \eqref{e-def-vj}  form a Parseval frame that satisfies the assumption of the lemma.  
    
    \medskip
    
  First of all, we observe that $\{ v_1, ..., v_{n+1}\} $ is a Parseval frame if and only if   $\vec x=(x_1, x_2, ..., x_n, \alpha_1)$  is a unit vector that  satisfies 
  the orthogonality conditions:
 %
\begin{equation}\label{1} (\t v_2, ... \t v_n,\t w)   \vec x = \left(\begin{matrix}  a_{22}   & a_{23} &...   &a_{2, n} &\alpha_2\\ 
 0    &a_{33} &...    &a_{3, n} & \alpha_3
\\ 
  \vdots &\vdots  &\vdots  &\vdots&\vdots
\\
 0     & 0 & ...    & a_{n,n} &\alpha_n
\end{matrix}\right)    \left(\begin{matrix}     x_2\\        \vdots \\  x_n  \\  \alpha_1\end{matrix}\right)\!\! = \!\left(\begin{matrix}  0\\  0 \\   \vdots \\  0 \end{matrix}\right).
 \end{equation}
   By a  well known formula of linear algebra,  
   the vector  
   \begin{equation}\label{e-formula-det}  \vec y= y_2\vec e_2+...+ \vec e_{n+1} y_{n+1}  
   =
   \mbox{det}\left(\begin{matrix}    \vec e_2   &\vec e_3  &...   &\vec e_{n } &\vec e_{n+1}\\   a_{2, 2}  &a_{2, 3}  &...   &a_{2, n} &\alpha_2\\ 
 0  &a_{3, 3}  &...   &a_{3, n} & \alpha_3
\\ 
  \vdots &\vdots  &\vdots  &\vdots & \vdots
\\
  0  &0  &...    & a_{n-1, n}&\alpha_{n-1}
\\
 0  &0  &...   & a_{n,n} &\alpha_n
\end{matrix}\right)  
\end{equation}
 is   orthogonal to the rows of the matrix in \eqref{1}, and so it is a constant multiple of $\vec x  $.    That is, 
 $\vec x= \lambda \vec y $ for some $\lambda \in\R$.
 
 \medskip
Let us  prove   that $|| \vec y||=1$. The  rows of the matrix $(\t v_2, ..., \t v_n, \t w)$ are orthonormal, and so after a rotation       \begin{equation}\label{prod}  (\t v_2, ..., \t v_n, \t w) = 
\left(
 \begin{matrix}    &0  &1  &...   &0 &0\\ 
&0  &0 &  ...   &0 & 0
\\ 
& \vdots &\vdots     &\vdots&\vdots&\vdots
\\
&0  &0  &...   & 1&0
\\
&0  &0  &...   & 0 &1\end{matrix}\right).\end{equation}
The formula  in \eqref{e-formula-det} applied with the matrix in  \eqref{prod} produces the vector $\vec e_1=(1, 0, ...,0)$. Thus,  $\vec y$ in \eqref{e-formula-det} is a rotation of $\vec e_1$, and so it is a unit vector as well.

 We now prove that   $|\lambda| <1$. From $ \vec x= (x_2, ..., x_n, \alpha_1) =\lambda (y_2, ... , y_n,y_{n+1})$,  we obtain $\lambda=\alpha_1/y_{n+1} .$  
 By \eqref{e-formula-det}, $$   y_{n+1} = (-1)^{n+1} \mbox{det} \left(\begin{matrix}     &a_{2, 2}  &a_{2, 3}  &... & a_{2, n-1} &a_{2, n}  \\ 
&0  &a_{3, 3}  &...  & a_{3, n-1}&a_{3, n}  
\\ 
& \vdots &\vdots  &\vdots  &\vdots &\vdots
\\
&0  &0  &...  &  a_{n-1,n-1} & a_{n-1, n} 
\\
&0  &0  &...  & 0& a_{n,n}  
\end{matrix}\right)   = (-1)^{n+1} \prod_{j=2}^n a_{j,j}.
$$ 
Recalling that by \eqref{e-ajj},  $a_{j,j}= \frac{\sqrt{1- \sum_{k=j }^{n}\alpha_{k}^2}}{\sqrt{1-\sum_{k=j+1 }^{n }\alpha_{k}^2 }}$, 
 we can see at once that 

 %
 \begin{align} \nonumber y_{n+1} &= (-1)^{n+1}\prod_{j=2}^n a_{j,j}  = (-1)^{n+1}\sqrt{1- \alpha_2^2-...-\alpha_{n-1}^2-\alpha_n^2} \\\label{e-prod-ajj} = & (-1)^{n+1}\sqrt{1-||w||^2+\alpha_1^2}.\end{align}
 In view of  $\lambda y_{n+1}=\alpha_1$, we obtain 
$$\lambda= (-1)^{n+1}\frac{\alpha_1}{\sqrt{1- ||w||^2+\alpha_1^2 }} . $$ 
 Clearly, $|\lambda|<1$ because $||w||<1$.  We now let   \begin{equation}\label{e-a11} x_1 = \sqrt{1-\lambda^2} = \frac{\sqrt{1-||w||^2}}{\sqrt{1- ||w||^2+\alpha_1^2}},\end{equation} and we define the $v_j$'s as in \eqref{e-def-vj}. The first  rows of the matrix  $(v_1, ...,\, v_{n+1})$ is $(\sqrt{1-\lambda^2},\vec x)= (\sqrt{1-\lambda^2},\lambda \vec y)$, and so it is unitary and  perpendicular to the other rows. Therefore, the $\{v_j\}$ form a tight frame that satisfies the assumption of the theorem.  
%
\end{proof}

\medskip
The proof of Theorem  \ref{T-Uniq-n+1}   shows the following interesting fact:    
 By \eqref{e-prod-ajj} and \eqref{e-a11}
  $$ 
  \mbox {det} (v_1, ..., v_n) =  \prod_{j=1}^n a_{jj} = x_1 \prod_{j=2}^n a_{jj}= \sqrt{1-||w||^2}.
$$
  This formula   does not depend on the fact that 
  $(v_1, ...,v_n)$ is right triangular,  because every $n\times n$ matrix  can be reduced in this form with a rotation that does not alter its determinant and does not alter  the norm of $w$. This observation proves the following   

\begin{Cor} \label{C-determinant}
Let $\{w_1, ..., w_{n+1}\}$ be  a Parseval frame.  Then,  $$\mbox {det} ( w_1, ...,\hat w_j, ..., w_{n+1})= \pm \sqrt{1-||w_j||^2}.
$$
\end{Cor}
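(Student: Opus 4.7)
My plan is to reduce the general case to the triangular situation analyzed inside the proof of Theorem \ref{T-Uniq-n+1}, by noting that an orthogonal change of coordinates preserves the Parseval property, preserves $||w_j||$, and changes the determinant only by a sign.

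Fix an index $j$, and let $M_j$ denote the $n\times n$ matrix with columns $w_1,\ldots,\hat w_j,\ldots,w_{n+1}$ in that order. The first step is to invoke the QR factorization to obtain an orthogonal $Q\in O(n)$ such that $QM_j$ is right triangular with nonnegative diagonal. Since $\{w_1,\ldots,w_{n+1}\}$ is Parseval if and only if the rows of $(w_1,\ldots,w_{n+1})$ are orthonormal, and since those rows are mapped by $Q$ to vectors that are still orthonormal, the family $\{Qw_1,\ldots,Qw_{n+1}\}$ is again a Parseval frame; furthermore $||Qw_j||=||w_j||$. After relabeling so that $Qw_j$ occupies the last slot, we are in the setting of Theorem \ref{T-Uniq-n+1} (assuming $||w_j||<1$), and by formulas \eqref{e-ajj}, \eqref{e-a11} and the telescoping computation displayed in the remark immediately preceding the corollary,
\[
\det(QM_j)\;=\;\prod_{i=1}^n a_{ii}\;=\;\sqrt{1-||Qw_j||^2}\;=\;\sqrt{1-||w_j||^2}.
\]
Since $\det Q=\pm1$, this yields $\det M_j=\pm\sqrt{1-||w_j||^2}$.

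The one point requiring separate treatment is the boundary case $||w_j||=1$, excluded from the hypothesis of Theorem \ref{T-Uniq-n+1}: here the Parseval identity $||w_j||^2=\sum_i\langle w_j,w_i\rangle^2$ forces $\langle w_j,w_i\rangle=0$ for every $i\ne j$, so $w_j$ is orthogonal to the span of the other $n$ vectors, that span has dimension at most $n-1$, and therefore $\det M_j=0=\sqrt{1-||w_j||^2}$. I do not anticipate a substantive obstacle, since the corollary essentially packages the invariance of the Parseval property under orthogonal change of coordinates together with the triangular determinant computation already carried out inside the proof of Theorem \ref{T-Uniq-n+1}; the only mild bookkeeping is the column reordering used to move $w_j$ into the canonical last slot, which contributes a further sign already absorbed in the $\pm$.
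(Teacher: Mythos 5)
Your proof is correct and follows essentially the same route as the paper: the authors likewise reduce to the triangular case via a rotation (QR), noting that it preserves the Parseval property, the norm of $w_j$, and the determinant up to sign, and then read off $\det = \prod_i a_{ii} = \sqrt{1-\|w_j\|^2}$ from the construction in Theorem \ref{T-Uniq-n+1}. Your separate treatment of the boundary case $\|w_j\|=1$ (forced orthogonality via the Parseval identity, hence a degenerate span and zero determinant) is a worthwhile addition, since the paper's argument silently relies on the hypothesis $\|w\|<1$ of Theorem \ref{T-Uniq-n+1} and does not address that case.
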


  \medskip
  \begin{proof}[Proof of Theorem \ref{T-NandSuff}]

  Let $ \ell_j=||v_j||$. If $\{v_1, ..., v_{n+1}\}$ is a Parseval frame, then the rows of the matrix  $B=(   v_1 , ...,   v_{n+1} )$  are orthonormal. 
While proving  Theorem \ref{T-Uniq-n+1}, we have constructed a vector $\vec x= (x_1, ..., x_{n+1})$, with 
$x_j=   (-1)^{j+1} \lambda $ det$( v_1, ...,\hat v_j, ..., v_{n+1})$, which is  perpendicular to the rows of $B$. 
By  Corollary \ref{C-determinant}, 
$x_j=\pm \sqrt{1-\ell_{j}^2}$.  Since    $\B$ is a Parseval frame,   $ \ell_1^2+...+ \ell_{n+1}^2=n$, and so 
$$
||\vec x||^2=  x_1^2+... +x_{n+1}^2= (1-\ell_1^2)+ ... + (1-\ell_{n+1}^2)=1.
$$
So, the $(n+1)\times (n+1)$ matrix $\t B$  which is obtained from $B$ with the addition of the row $\vec y$, is 
unitary, and therefore also the columns of $\t B$ are orthonormal. 
For every $  i, j\leq n+1$,
\begin{equation}\label{id1}
\langle   v_i,  v_j\rangle \pm \sqrt{1-\ell_{i}^2 }\sqrt{1-\ell_{j}^2}= \ell_i \ell_j\cos\theta_{ij}\pm 
\sqrt{1-\ell_i^2}\sqrt{1-\ell_j^2}=0
\end{equation}
which implies \eqref{e-id-cosine}.

\medskip
\noindent
Conversely, suppose that \eqref{e-id-cosine} holds.   By \eqref{id1}, the vectors $\t v_j=(\pm \sqrt{1-\ell_j^2}, \ v_j)$ are orthonormal for some choice of the sign $\pm$; therefore,
 the  columns of the matrix $\t B$ are orthonormal, and so also the rows are orthonormal, and $\B$ is a Parseval frame.

\end{proof}

\medskip
\begin{Cor}\label{C-boundsforlength}
Let $\B=\{v_1, ... v_{n+1}\}$ be a nontrivial Parseval frame. Then, $ \frac 1{n+1} <\ell_j^2 $ for every $j$. Moreover,  for all $j$ with the possible exception of one, 
  $\frac 12 <\ell_j^2 $.
\end{Cor}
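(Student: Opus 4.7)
The plan is to extract a strict pairwise inequality from the identity of Theorem \ref{T-NandSuff} and read off both conclusions from it. Since $\B$ is nontrivial, no two vectors are parallel, and therefore $|\cos\theta_{i,j}|<1$ for every $i\neq j$. Inserting this strict bound into
\[
(1-\ell_i^2)(1-\ell_j^2)=\ell_i^2\ell_j^2\cos^2\theta_{i,j}
\]
and expanding both sides yields the key pairwise inequality
\[
\ell_i^2+\ell_j^2>1 \qquad (i\neq j).
\]

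The second assertion is then immediate. I would order the squared norms as $\ell_{(1)}^2\leq\ell_{(2)}^2\leq\cdots\leq\ell_{(n+1)}^2$ and apply the pairwise inequality to the two smallest: $2\ell_{(2)}^2\geq\ell_{(1)}^2+\ell_{(2)}^2>1$, so $\ell_{(j)}^2>1/2$ for every $j\geq 2$. Hence only the smallest squared norm can possibly fail to exceed $1/2$.

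For the first assertion $\ell_j^2>1/(n+1)$, the natural approach is by contradiction. Assuming $\ell_j^2\leq 1/(n+1)$ for some $j$, the pairwise bound above forces $\ell_i^2>1-\ell_j^2\geq n/(n+1)$ for every $i\neq j$; one then tries to combine this with the Parseval normalization $\sum_i\ell_i^2=n$ and the ceiling $\ell_i^2\leq 1$ (established right after Theorem \ref{T-NandSuff}) to produce a contradiction. The main obstacle is that the naive summation $\sum_{i\neq j}\ell_i^2>n^2/(n+1)$ only yields $\ell_j^2<n/(n+1)$, which is consistent with the hypothesis and so does not close. To sharpen this, I expect one must exploit structure beyond the pairwise bound alone: most plausibly the unitary extension $\widetilde B$ from the proof of Theorem \ref{T-NandSuff}, whose extra row $\vec x=(\pm\sqrt{1-\ell_j^2})_{j=1}^{n+1}$ is a unit vector with pairwise coordinate-squared sums $(1-\ell_i^2)+(1-\ell_j^2)<1$, together with the determinantal identity from Corollary \ref{C-determinant}, viewing the claim as a constrained optimization over the admissible $(\ell_j^2)$.
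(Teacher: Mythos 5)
Your treatment of the pairwise inequality and of the second assertion is correct and is essentially the paper's own argument: the paper rewrites \eqref{e-id-cosine} as $1-\ell_i^2-\ell_j^2+\ell_i^2\ell_j^2\sin^2\theta_{ij}=0$ (its \eqref{44a}), reads off $\ell_i^2+\ell_j^2\ge 1$ for $i\ne j$, and concludes that at most one $\ell_j^2$ can fail to exceed $\tfrac12$, exactly as you do with the ordered $\ell_{(1)}^2\le\cdots\le\ell_{(n+1)}^2$.

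For the first assertion, the gap you flagged is genuine, and no refinement along the lines you sketch will close it, because the claim $\ell_j^2>\tfrac1{n+1}$ is false. The paper's proof sums \eqref{44a} over $i$ and displays $1-(n+1)\ell_j^2+\sum_{i=1}^{n+1}\ell_j^2\ell_i^2\sin^2\theta_{ij}=0$, but this identity is obtained by including the term $i=j$, for which \eqref{44a} does not hold (it would force $\ell_j^2=\tfrac12$). Summing only over $i\ne j$ and using $\sum_i\ell_i^2=n$ yields $-(n-1)\ell_j^2+\ell_j^2\sum_{i\ne j}\ell_i^2\sin^2\theta_{ij}=0$, which is just the identity of Theorem \ref{TNec-Parframe} and carries no new information --- precisely the dead end you ran into. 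A concrete counterexample in $\R^2$: let $v_1=\bigl(-\tfrac{1}{\sqrt{10}},0\bigr)$, $v_2=\bigl(\tfrac{3}{\sqrt{20}},\tfrac{1}{\sqrt 2}\bigr)$, $v_3=\bigl(\tfrac{3}{\sqrt{20}},-\tfrac{1}{\sqrt 2}\bigr)$. The rows of $(v_1,v_2,v_3)$ are orthonormal, so this is a Parseval frame; it is nontrivial since $\cos^2\theta_{12}=\tfrac{9}{19}$ and $\cos^2\theta_{23}=\tfrac{1}{361}$; yet $\|v_1\|^2=\tfrac{1}{10}<\tfrac13$. Taking $\|v_1\|^2=\varepsilon$ and $\|v_2\|^2=\|v_3\|^2=1-\varepsilon/2$ shows the smallest squared norm can be made arbitrarily small, so the only lower bounds actually available from \eqref{e-id-cosine} are the pairwise bound $\ell_i^2+\ell_j^2>1$ and its consequence that all but possibly one of the $\ell_j^2$ exceed $\tfrac12$. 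Your instinct that the unitary extension $\t B$ or Corollary \ref{C-determinant} might supply the missing ingredient is therefore not a viable repair; the correct move is to restrict the corollary to its second assertion.
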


\begin{proof} The identity  \eqref{e-id-cosine}  implies that, for $i\ne j$,  
\begin{equation}\label{44a} 1-\ell_j^2-\ell_i^2 +\ell_i^2\ell_j^2\sin^2\theta_{ij}=0.\end{equation} 
That implies  $\ell_j^2+\ell_i^2 \ge  1$ for every $i\ne j$, and so all $\ell_j^2$'s, with the possible exception of one, are $\ge \frac 12$.
 Recalling that $\sum_{i=1}^{n+1}\ell_i^2=n$,   
$$1- (n+1)\ell_j^2 + \sum_{i=1}^{n+1} \ell_j^2\ell_i^2 \sin^2\theta_{ij} =0, 
$$
and so   $\ell_j^2>\frac{1}{n+1}$. 
 %
 \end{proof}
 
 \medskip
\begin{proof}[Proof of Theorem \ref{TNec-Parframe}]   After a rotation, we can assume $v_i=v_1= (\ell_1, 0, ... ,0)$. We let $\theta_{1,j}=\theta_j$  for simplicity.
With this rotation  $v_j= (\ell_j\cos\theta_{ j} , \ \ell_j\sin\theta_{ j}  w_j)$ where $   w_j$ is a unitary vector in $\R^{n-1}$.
The rows of the matrix  $(v_1, ...   v_{N})$  are orthonormal, and so the norm of the first row is  
\begin{equation}\label{33}
\sum_{j\ge 1}\ell_j^2\cos^2\theta_{ j}+ \ell_1^2=1
\end{equation}
The   projections of $v_2$, .... $v_{N}$ over a hyperplane that is orthogonal to $v_1$  form a tight frame on this hyperplane. That is to say that   $\{\ell_2\sin\theta_{  2} w_2, \ ... , \ \ell_{N}\sin\theta_{  N} w_{N}\}$ is a tight frame in $\R^{n-1}$, and  so it satisfies 
\begin{align}\nonumber
&\ell_2^2\sin\theta_{ 2}^2 ||w_2||_2^2+ ... + \ell_{N}^2\sin\theta_{  N}^2 || w_{N}||_2^2 \\ \label{44}&= 
\ell_2^2\sin\theta_{ 2}^2 + ... + \ell_{N}^2\sin\theta_{  N}^2 =n-1.
\end{align}
\end{proof}

\begin{Cor}\label{C-orthon}
$\{v_1,v_2,...,v_n\}$ is a Parseval frame in $\R^n$ if and only if  the $v_i$'s are  orthonormal
\end{Cor}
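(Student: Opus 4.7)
The proof is essentially one line once the preliminary remark about Parseval frames is in hand. My plan is the following.

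For the forward direction, an orthonormal set $\{v_1,\dots,v_n\}$ is an orthonormal basis of $\R^n$, so the classical Parseval identity $\|v\|^2=\sum_{j=1}^n\langle v,v_j\rangle^2$ holds, which is precisely the statement that $\{v_1,\dots,v_n\}$ is a Parseval frame. For the reverse direction I would invoke the preliminary fact (stated at the beginning of Section~2) that $\{v_1,\dots,v_N\}$ is a Parseval frame in $\R^n$ if and only if the rows of the $n\times N$ matrix $(v_1,\dots,v_N)$ are orthonormal. Specializing to $N=n$, the matrix $V=(v_1,\dots,v_n)$ is square with orthonormal rows, hence $VV^T=I$, which forces $V^TV=I$. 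Thus the columns of $V$, namely the $v_j$'s, are orthonormal.

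An equivalent route, more in the spirit of the surrounding results, would be to use Theorem~\ref{TNec-Parframe}. From the standard identity $\sum_{j=1}^n\ell_j^2=n$ for Parseval frames, combined with the bound $\ell_j^2\le 1$ (which follows by applying the Parseval identity to $v=v_j$: $\ell_j^2=\ell_j^4+\sum_{k\ne j}\langle v_j,v_k\rangle^2$), we conclude $\ell_j=1$ for every $j$. Plugging $\ell_j=1$ back into the same identity gives $\sum_{k\ne j}\langle v_j,v_k\rangle^2=0$, so the vectors are pairwise orthogonal as well as unit length. No step here is subtle; the entire content of the corollary is that a square matrix with orthonormal rows has orthonormal columns, and the only minor point worth spelling out is the norm bound $\ell_j\le 1$ for Parseval frames.
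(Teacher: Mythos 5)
Your proposal is correct, and your primary route is genuinely different from the paper's. The paper deduces the corollary from Theorem \ref{TNec-Parframe}: it first notes from \eqref{33} that every $\ell_j\leq 1$, and then observes that \eqref{44} expresses $n-1$ as a sum of $n-1$ terms $\ell_j^2\sin^2\theta_{ij}$, each of which is at most $1$, forcing $\ell_j=1$ and $\sin\theta_{ij}=1$ for all $j\neq i$. Your main argument instead specializes the preliminary characterization (orthonormal rows of $(v_1,\dots,v_N)$) to the square case $N=n$ and uses the elementary fact that $VV^T=I$ implies $V^TV=I$; this is shorter, avoids the angle decomposition entirely, and also cleanly covers the forward direction, which the paper leaves implicit. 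Your alternative argument (the pigeonhole $\sum_j\ell_j^2=n$ with $\ell_j^2\leq 1$, followed by applying the Parseval identity to $v=v_j$ to kill the cross terms $\langle v_j,v_k\rangle$) is essentially a coordinate-free version of the paper's argument and is equally valid. All steps check out; in particular your derivation of $\ell_j^2\leq 1$ from $\ell_j^2=\ell_j^4+\sum_{k\neq j}\langle v_j,v_k\rangle^2$ is correct and is a nice self-contained substitute for the paper's appeal to \eqref{33}.
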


\begin{proof}  
By   \eqref{33}, all vectors in a Parseval frame have length $\leq 1$. By \eqref{44}
$$
\sum_{j=1}^{n} \ell_j^2 \sin^2 \theta_{i,j} = n-1   
$$
which implies that $\ell_j=1$ and $\sin\theta_{ij}=1$ for every $j\ne i$, and so all vectors are orthonormal. 
\end{proof}
 
\begin{proof}[Proof of Corollary \ref{C-nec-cosines}]
Assume that $\B$ is scalable; fix $i<n+1$, and  chose $j\ne k\ne i$. By  \ref{e-id-cosine} 
\begin{align*}
(1-\ell_i^2)(1-\ell_j^2)&= \ell_i^2 \ell_j^2\cos\theta_{i,j}^2,\\  (1-\ell_i^2)(1-\ell_k^2)&= \ell_i^2 \ell_k^2\cos\theta_{i,k}^2,
\\
(1-\ell_k^2)(1-\ell_j^2)&= \ell_k^2 \ell_j^2\cos\theta_{k,j}^2.
\end{align*}
These equations are easily solvable for  for $\ell_1^2,\ \ell_j^2$ and $\ell_k^2$; we obtain
$$\ell_i^2= \frac{|\cos\theta_{k,j}|}{|\cos\theta_{k,j}|+|\cos\theta_{k, i}\cos\theta_{j,i}|}.$$ 
This expression for $\ell_i$ must be independent of the choice of $j$ and $k$, and so \eqref{e-2cos-ratios} is proved
\end{proof}

\end{document}